\newtheorem{thm}{Theorem}[section]
\newtheorem{lem}[thm]{Lemma}
\newtheorem{prop}[thm]{Proposition}
\newtheorem{prob}[thm]{Problem}
\theoremstyle{definition}
\newtheorem{rem}[thm]{Remark}
\newtheorem{exa}[thm]{Example}
\numberwithin{equation}{section}
\newcommand{\N}{\mathbb{N}}                   
\newcommand{\R}{\mathbb{R}}                   
\newcommand{\C}{\mathbb{C}}                   
\newcommand{\A}{\mathscr{A}_a}                
\newcommand{\Na}{\mathcal{N}}                 
\newcommand{\Ro}{\mathcal{R}}                 
\newcommand{\vp}{\varphi}
\begin{document}


\baselineskip=17pt



\title[On solutions of linear equations]{On solutions of linear equations with polynomial coefficients}

\author[J. Adamus]{Janusz Adamus}
\address{Department of Mathematics, The University of Western Ontario, London, Ontario, Canada N6A 5B7}
\email{jadamus@uwo.ca}

\author[H. Seyedinejad]{Hadi Seyedinejad}
\address{Department of Mathematics, The University of Western Ontario, London, Ontario, Canada N6A 5B7}
\email{sseyedin@uwo.ca}
\thanks{J. Adamus's research was partially supported by the Natural Sciences and Engineering Research Council of Canada}

\date{}

\begin{abstract}
We show that a linear functional equation with polynomial coefficients need not admit an arc-analytic solution even if it admits a continuous semialgebraic one. We also show that such an equation need not admit a Nash regulous solution even if it admits an arc-analytic one.
\end{abstract}

\subjclass[2010]{Primary 14P10, 14P20, 14P99}

\keywords{arc-analytic function, Nash regulous function, semialgebraic geometry}

\maketitle


\section{Introduction}
\label{sec:intro}

The present note is concerned with existence of solutions to linear equations with polynomial coefficients in various classes of semialgebraic functions in $\R^n$. Recall that a set $X$ in $\R^n$ is called \emph{semialgebraic} if it can be written as a finite union of sets of the form $\{x\in\R^n: p(x)=0,q_1(x)>0,\dots,q_r(x)>0\}$, where $r\in\N$ and $p,q_1,\dots,q_r$ are polynomial functions. Given $X\subset\R^n$, a \emph{semialgebraic function} $f:X\to\R$ is one whose graph is a semialgebraic subset of $\R^{n+1}$.

A continuous function $f:\R^n\to\R$ is said to be \emph{regulous} if there exist polynomial functions $p$ and $q$ such that the zero locus of $q$ is nowhere dense in $\R^n$ and $f(x)=p(x)/q(x)$ whenever $q(x)\neq0$. A real analytic semialgebraic function on $\R^n$ is called \emph{Nash}. A continuous function  $f:\R^n\to\R$ is said to be \emph{Nash regulous} if there exist Nash functions $g$ and $h$ such that the zero locus of $h$ is nowhere dense in $\R^n$ and $f(x)=g(x)/h(x)$ whenever $h(x)\neq0$. Finally, recall that a function $f:X\to\R$ is called \emph{arc-analytic} if it is analytic along every arc, that is, $f\circ\gamma$ is analytic for every real analytic $\gamma:(-1,1)\to X$.
We shall denote the regulous, Nash regulous, and arc-analytic semialgebraic functions on $\R^n$ by $\Ro^0(\R^n)$, $\Na^0(\R^n)$ and $\A(\R^n)$, respectively. We have
\begin{equation}
\label{eq:rel}
\Ro^0(\R^n)\subset\Na^0(\R^n)\subset\A(\R^n)\,.
\end{equation}
The first inclusion is trivial and the second one follows from \cite[Prop.\,3.1]{Kuch}. Both inclusions are strict.

The above classes of semialgebraic functions have been extensively studied recently (see, e.g., \cite{AS1, AS2, FHMM, Kuch} and the references therein), in particular, in the context of the following problem of Fefferman and Koll{\'a}r \cite{FK}.
\medskip

Consider a linear equation
\begin{equation}
\label{eq:FK}
f_1\vp_1+\dots+f_r\vp_r=g,
\end{equation}
where $g$ and the $f_j$ are continuous (real-valued) functions on $\R^n$. Fefferman-Koll{\'a}r asked whether assuming that $g$ and the $f_j$ have some regularity properties, one could find a solution $(\vp_1,\dots,\vp_r)$ to \eqref{eq:FK} with similar regularity properties.

This is a difficult problem, even when the coefficients of \eqref{eq:FK} are polynomial.
One line of attack is to instead consider a somewhat easier question:

\begin{prob}
\label{prob:1}
Suppose that \eqref{eq:FK} admits a solution $(\vp_1,\dots,\vp_r)$ within some class of functions. Does there exist then a solution to \eqref{eq:FK} within a strictly smaller class?
\end{prob}

In the semialgebraic setting, the most general positive answer to this problem is given by \cite[Cor.\,29(1)]{FK}: \emph{If $f_1,\dots,f_r$ are polynomial, $g$ is semialgebraic and \eqref{eq:FK} admits a continuous solution, then it admits a continuous semialgebraic solution.}
In a similar vein, Kucharz and Kurdyka showed that, in case $n=2$, if $f_1,\dots,f_r,g$ are regulous then \eqref{eq:FK} admits a continuous solution if and only if it admits a regulous solution (cf. \cite[Cor.\,1.7]{KuKu}).

On the other hand, the above is known to fail for $n\geq3$. Namely, by \cite[Ex.6]{KN}, there exist $f_1,f_2,g\in\R[x,y,z]$ such that $f_1\vp_1+f_2\vp_2=g$ admits a continuous solution, but no regulous one.
Nonetheless, the solution from \cite[Ex.6]{KN} is Nash regulous, and in \cite{Kuch} Kucharz conjectured that existence of a continuous solution to \eqref{eq:FK} should imply the existence of a Nash regulous one, for any $n\geq1$, provided $f_1,\dots,f_r,g$ are polynomial.

The main goal of this note is to show that the latter is not the case. In Example~\ref{ex:no-arc-an}, we show that there exists a linear equation with polynomial coefficients which admits a continuous solution, but no arc-analytic one. By \eqref{eq:rel}, it follows that there is no Nash regulous solution either. Perhaps even more interestingly, in Example~\ref{ex:no-Nash-reg} we show a linear equation with polynomial coefficients that \emph{does} admit an arc-analytic solution and has no Nash regulous solution nonetheless. Both our examples are modifications of \cite[Ex.6]{KN}.
\medskip


\section{Toolbox}
\label{sec:tools}

The following facts will be needed in Examples~\ref{ex:no-arc-an} and~\ref{ex:no-Nash-reg}.

\begin{prop}
\label{prop:BM}
Let $f:\R^n\to\R$ be a semialgebraic function. Then, $f$ is arc-analytic if and only if there exists a mapping $\pi:\widetilde{R}\to\R^n$ which is a finite sequence of blowings-up with smooth algebraic centers, such that the composite $f\circ\pi$ is a Nash function.
\end{prop}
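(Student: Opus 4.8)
The plan is to prove the two implications separately. The ``if'' direction is elementary, obtained by lifting analytic arcs through the blowings-up; the ``only if'' direction is the substantial one and is, in essence, the theorem of Bierstone and Milman on arc-analytic functions, specialised to the semialgebraic (hence finite, algebraic) setting. For the ``if'' direction, assume $\pi\colon\widetilde{R}\to\R^n$ is a finite composition of blowings-up with smooth algebraic centres and that $f\circ\pi$ is Nash. Fix a real-analytic arc $\gamma\colon(-1,1)\to\R^n$ and a point $t_0$; since analyticity is local, it suffices to produce, for some $\varepsilon>0$, a real-analytic lift $\widetilde\gamma\colon(t_0-\varepsilon,t_0+\varepsilon)\to\widetilde{R}$ with $\pi\circ\widetilde\gamma=\gamma$, for then $f\circ\gamma=(f\circ\pi)\circ\widetilde\gamma$ is, near $t_0$, a composite of a Nash map and a real-analytic one, hence real-analytic, and $\gamma$, $t_0$ being arbitrary, $f$ is arc-analytic. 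By induction it is enough to lift through a single blowing-up $b$ with smooth centre $C$: if $\gamma(t_0)\notin C$ the lift is immediate and unique; if a neighbourhood of $t_0$ maps into $C$, a local analytic trivialisation of the exceptional divisor (a projective bundle over $C$) together with a constant section gives an analytic lift; and if $\gamma(t_0)\in C$ while $\gamma$ is not locally contained in $C$, then choosing local analytic coordinates $(u_1,\dots,u_n)$ at $\gamma(t_0)$ with $C=\{u_1=\dots=u_k=0\}$ and an index $j\le k$ for which $\mathrm{ord}_{t_0}(u_j\circ\gamma)$ is finite and minimal, the quotients $(u_i\circ\gamma)/(u_j\circ\gamma)$, $i\le k$, are real-analytic near $t_0$ and yield an analytic lift into the $j$-th chart of $b$.

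For the ``only if'' direction, assume $f$ is semialgebraic and arc-analytic. I would first record that such an $f$ is continuous (a standard fact; cf.\ \cite{Kuch}), so that its graph $\Gamma\subset\R^{n+1}$ is an $n$-dimensional semialgebraic set carried homeomorphically onto $\R^n$ by the first projection, and that $\Gamma$ is arc-symmetric: if a real-analytic arc $\delta=(\delta',\delta_{n+1})$ in $\R^{n+1}$ meets $\Gamma$ in a set with an accumulation point, then $\delta_{n+1}$ and $f\circ\delta'$ are real-analytic and agree on that set, hence everywhere, so $\delta$ lies in $\Gamma$. Next I would desingularise the graph: with $X$ the Zariski closure of $\Gamma$ (in $\R^n\times\mathbb{P}^1$, say, so that the first projection is proper), resolution of singularities yields a finite composition of blowings-up with smooth algebraic centres $\rho\colon X'\to X$, $X'$ smooth, which is an isomorphism over a dense open subset of $X$ meeting $\Gamma$ densely; letting $\Gamma'\subset X'$ be the Zariski closure of the preimage of that open part of $\Gamma$, the composite $\sigma\colon X'\to X\to\R^n$ is proper, $\sigma|_{\Gamma'}$ is a birational homeomorphism onto $\R^n$, the last-coordinate function $\psi$ on $X'$ is regular on the affine chart, and $\psi|_{\Gamma'}=f\circ(\sigma|_{\Gamma'})$ since these two functions agree on a dense set and $f$ is continuous.

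It then remains to trade the proper, but in general not birational, morphism $\sigma$ for a composition of blowings-up of $\R^n$ itself. Here I would invoke Hironaka's flattening theorem applied to $\sigma$, together with further resolution (for instance, resolving the branch locus of the associated finite cover to a normal-crossings divisor), in order to obtain a finite composition of blowings-up with smooth algebraic centres $\pi\colon\widetilde{R}\to\R^n$ for which, after pulling back along $\pi$, the rational section inverse to $\sigma|_{\Gamma'}$ extends to a \emph{Nash} map $s\colon\widetilde{R}\to X'$ with image in $\Gamma'$. Granting this, $f\circ\pi=\psi\circ s$ on a dense open subset of $\widetilde{R}$, hence everywhere by continuity; as $f$ is finite-valued, this map takes values in $\R$, and being the composite of the Nash map $s$ with the regular last-coordinate function it is Nash, as required.

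I expect the main obstacle to be precisely this last step: after finitely many algebraic blowings-up the sheet of $\sigma$ singled out by $\Gamma'$ must be selected by a Nash --- though typically \emph{not} regular --- map, and this is exactly where arc-analyticity (equivalently, arc-symmetry of $\Gamma$) is indispensable, as well as where semialgebraicity rather than mere subanalyticity is used to keep the number of blowings-up finite (via compactness, after compactifying $\R^n$). For a function such as $f(x)=|x|$ the selected sheet crosses the ramification locus and no sequence of blowings-up of $\R$ can repair it, which is consistent with $|x|$ not being arc-analytic. A complete treatment of this step is the content of Bierstone and Milman's work on arc-analytic functions, the Nash/algebraic form of the centres being furnished by the algebraic versions of resolution of singularities and of flattening.
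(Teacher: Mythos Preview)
Your proposal is correct, but note that the paper's own ``proof'' is a single line: it simply records that the proposition is a special case of Theorem~1.4 of Bierstone--Milman \cite{BM}. What you have written is not really an alternative argument but rather an expanded sketch of that same theorem: your ``if'' direction is the elementary arc-lifting lemma (which you handle cleanly and completely), and your ``only if'' direction is an outline of the Bierstone--Milman proof --- desingularise the graph, use arc-symmetry to keep track of the right sheet, then flatten --- whose decisive step you yourself acknowledge is ``the content of Bierstone and Milman's work''. So the two approaches coincide; yours simply unpacks the citation. This is pedagogically useful and your sketch is accurate (including the observation that semialgebraicity is what keeps the resolution finite and algebraic), but if the goal is a self-contained proof, be aware that the last step --- producing a \emph{Nash} section $s\colon\widetilde{R}\to X'$ after finitely many blowings-up --- is genuinely delicate and is not something one can wave through; it is precisely the theorem you are meant to be quoting.
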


\begin{proof}
This is a special case of \cite[Thm.\,1.4]{BM}.
\end{proof}

Functions satisfying the conclusion of Proposition~\ref{prop:BM} are called \emph{blow-Nash}.

\begin{rem}
\label{rem:1}
A function $f:\R\to\R$ is arc-analytic if and only if it is real analytic.
This follows directly from the definition of arc-analytic functions.
\end{rem}

Recall that a Nash set (i.e., the zero set of a Nash function) in $\R^n$ is said to be \emph{Nash irreducible} if it cannot be realized as a union of two proper Nash subsets. A set is called \emph{Nash constructible} if it belongs to the Boolean algebra generated by the Nash subsets in $\R^n$.

\begin{rem}[{cf. \cite[Ex.\,2.3]{S}}]
\label{rem:2}
The graph $\Gamma_{\!f}$ of \,$f(x,y)=\sqrt{x^4+y^4}$ is not Nash constructible in $\R^3$.

Indeed, let $X\coloneqq\{(x,y,z)\in\R^3:z^2=x^4+y^4\}$. We claim that $X$ is Nash irreducible.
First, note that $z^2-x^4-y^4$ is an irreducible element in the ring of convergent power series over $\C$. This implies that the set $\{z^2-x^4-y^4=0\}\subset\C^3$ has an irreducible (complex analytic) germ at the origin, of (complex) dimension $2$. On the other hand, the (real analytic) germ of $X$ at the origin is of (real) dimension $2$. Hence, its complexification has to be given by precisely $\{z^2-x^4-y^4=0\}$. It follows that the germ $X_0$ is irreducible, and there is thus no way to decompose $X$ into proper analytic subsets. (See \cite{C} for details on real analytic germs and their complexifications.)

The irreducibility of $X$ implies that $X$ is the smallest Nash set in $\R^3$ containing $\Gamma_{\!f}$. Therefore, by \cite[Prop.\,2.1]{Kuch}, if $\Gamma_{\!f}$ were Nash constructible then it would need to contain the smooth locus of $X$. This is not the case, however, because $X$ contains also the graph of $g(x,y)=-\sqrt{x^4+y^4}$.
\end{rem}
\medskip

The following result is new, though it follows easily from \cite{Kuch}.

\begin{lem}
\label{lem:3}
Let $n\geq1$ and let $f,g\in\A(\R^n)$. If the zero locus of $g$ is nowhere-dense in $\R^n$ and the function $f/g$ extends continuously to $\R^n$, then this extension is in $\A(\R^n)$.
\end{lem}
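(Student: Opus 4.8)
The plan is to reduce the assertion to the inclusion $\Na^0(\R^n)\subset\A(\R^n)$ of \eqref{eq:rel} (that is, to \cite[Prop.\,3.1]{Kuch}) by resolving $f$ and $g$. Write $h$ for the continuous extension of $f/g$. First I would observe that $h$ is automatically semialgebraic: since $\{g\neq0\}$ is dense and $h$ is continuous, the graph of $h$ in $\R^{n+1}$ is exactly the closure of the (semialgebraic) graph of $f/g|_{\{g\neq0\}}$, and the closure of a semialgebraic set is semialgebraic. So the real content is to prove that $h$ is arc-analytic.

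Next, by Proposition~\ref{prop:BM} (applied to $f$ and $g$ simultaneously, which \cite[Thm.\,1.4]{BM} allows) I would fix a single $\pi:\widetilde{R}\to\R^n$, a finite sequence of blowings-up with smooth algebraic centers, for which $F\coloneqq f\circ\pi$ and $G\coloneqq g\circ\pi$ are both Nash on $\widetilde{R}$. Since $\pi$ is proper, surjective, and an isomorphism over the complement of a nowhere-dense algebraic subset of $\R^n$, and $\{g=0\}$ is nowhere dense, the zero locus $\{G=0\}=\pi^{-1}(\{g=0\})$ is nowhere dense in $\widetilde{R}$. On the dense open set $\{G\neq0\}$ one has $h\circ\pi=F/G$, so $h\circ\pi$ is a continuous function on the Nash manifold $\widetilde{R}$ agreeing, off a nowhere-dense set, with a quotient of Nash functions whose denominator has nowhere-dense zero locus; in other words, $h\circ\pi$ is Nash regulous. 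Since the proof of \cite[Prop.\,3.1]{Kuch} is local, it applies verbatim on $\widetilde{R}$ and yields that $h\circ\pi$ is arc-analytic.

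The remaining step is to descend arc-analyticity through $\pi$. Given a real analytic arc $\gamma:(-1,1)\to\R^n$ and a point $t_0$, I would lift $\gamma$ near $t_0$ to a real analytic arc $\widetilde\gamma$ in $\widetilde{R}$ with $\pi\circ\widetilde\gamma=\gamma$: at each blowing-up, if the arc is not contained in the center being blown up then its analytic strict transform is the (unique) lift, and if it is contained in the center one lifts it by an analytic section of the exceptional projective bundle, which exists because the pull-back of that bundle to the interval is analytically trivial. Then $h\circ\gamma=(h\circ\pi)\circ\widetilde\gamma$ is analytic near $t_0$, and as $t_0$ is arbitrary, $h\circ\gamma$ is analytic on $(-1,1)$. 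Together with the first paragraph this gives $h\in\A(\R^n)$.

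The step I expect to be the main obstacle is the last one, the arc lifting: one must track real analytic arcs through the whole tower of blowings-up, including those lying inside the smooth centers or their strict transforms, and check that the locally constructed lifts agree on overlaps. (Alternatively, this could be avoided if one knows that a Nash regulous function on the Nash manifold $\widetilde{R}$ is itself blow-Nash: a further tower of blowings-up with smooth centers would then make $h\circ\pi$, hence $h$, a Nash function, and Proposition~\ref{prop:BM} would give $h\in\A(\R^n)$ directly — at the cost of invoking a resolution result for Nash regulous functions.)
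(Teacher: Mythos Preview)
Your proposal is correct, and up through the point where you establish that $h\circ\pi$ is Nash regulous on $\widetilde{R}$ it coincides with the paper's argument. The divergence is only in the final step: you descend arc-analyticity by lifting analytic arcs through the tower of blowings-up, whereas the paper does precisely what you sketch in your closing parenthetical --- it invokes \cite[Prop.\,3.1]{Kuch} to conclude that $h\circ\pi$ is arc-analytic, then applies Proposition~\ref{prop:BM} a second time (on $\widetilde{R}$) to obtain a further tower $\sigma:\widehat{R}\to\widetilde{R}$ with $h\circ\pi\circ\sigma$ Nash, and reads off arc-analyticity of $h$ from the ``if'' direction of Proposition~\ref{prop:BM} applied to the composite $\pi\circ\sigma$. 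Your arc-lifting route is more hands-on and avoids a second appeal to Bierstone--Milman resolution, at the cost of the bookkeeping you flag (handling arcs lying in the centers and patching local lifts); the paper's route is shorter but tacitly uses that both directions of Proposition~\ref{prop:BM} hold on a Nash manifold $\widetilde{R}$ rather than only on $\R^n$. You also make explicit the semialgebraicity of the extension $h$, which the paper leaves implicit.
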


\begin{proof}
By Proposition~\ref{prop:BM} above, there is a finite sequence $\pi:\widetilde{R}\to\R^n$ of blowings-up with smooth algebraic centers such that $f\circ\pi$ and $g\circ\pi$ are Nash functions on the Nash manifold $\widetilde{R}$. Continuity of $f/g$ implies that $(f\circ\pi)/(g\circ\pi):\widetilde{R}\to\R$ is a Nash regulous function. By \cite[Prop.\,3.1]{Kuch}, Nash regulous functions are arc-analytic, and hence there is a finite sequence $\sigma:\widehat{R}\to\widetilde{R}$ of blowings-up with smooth algebraic centers such that $(f/g)\circ\pi\circ\sigma=\dfrac{f\circ\pi}{g\circ\pi}\circ\sigma:\widehat{R}\to\R$ is Nash, by Proposition~\ref{prop:BM} again. Therefore, $f/g$ is arc-analytic.
\end{proof}


\section{Examples}
\label{sec:examples}

\begin{exa}
\label{ex:no-arc-an}
Consider the equation
\begin{equation}
\label{eq:H}
x^3y\,\vp_1+(x^3-y^3z)\vp_2=x^4.
\end{equation}
We claim that
\[
\vp_1(x,y,z)=z^{1/3},\qquad\vp_2(x,y,z)=\frac{x^3}{x^2+xyz^{1/3}+y^2z^{2/3}}
\]
is a continuous solution to \eqref{eq:H}, but no semialgebraic arc-analytic solution exists.
The function $\vp_1$ is clearly continuous. To see that $\vp_2$ is continuous, first note that the set
\[
\{(x,y,z)\in\R^3:x^2+xyz^{1/3}+y^2z^{2/3}=0\}
\]
is the union of the $y$-axis and the $z$-axis. Therefore, $x\to0$ whenever $(x,y,z)$ approaches the locus of indeterminacy of $\vp_2$. On the other hand, we have
\[
x^2+xyz^{1/3}+y^2z^{2/3}\geq\frac{1}{2}\left(x^2+y^2z^{2/3}\right),
\]
which shows that $\dfrac{x^2}{x^2+xyz^{1/3}+y^2z^{2/3}}\,$ is bounded. Hence, $\vp_2$ can be continuously extended by zero to $\R^3$.

Suppose now that \eqref{eq:H} has an arc-analytic solution $(\psi_1,\psi_2)$. Set
\[
S\coloneqq\{(x,y,z)\in\R^3:x^3=y^3z\},
\]
and note that $y$ vanishes on $S$ only when $x$ does so. Therefore, $x/y$ is a well defined function on $S\setminus\{x=0\}$, and thus, by \eqref{eq:H}, we obtain that
\[
\left.\psi_1\right|_{S\setminus\{x=0\}}=\left.\frac{x}{y}\right|_{S\setminus\{x=0\}}\,.
\]
Note that every point $(0,0,c)$ of the $z$-axis can be approached within $S\setminus\{x=0\}$, even by an analytic arc. Indeed, for instance, by the arc $(\sqrt[3]{c}t,t,c)$ for $c\neq0$ and the arc $(t^2,t,t^3)$ for $c=0$. This allows us to write
\[
\lim_{(x,y,z)\to(0,0,c)}\psi_1(x,y,z)=\lim_{(x,y,z)\to(0,0,c)}\left.\frac{x}{y}\right|_{S\setminus\{x=0\}}\!\!=\,c^{1/3}\,.
\]
Therefore, $\psi_1|_{z\textrm{-axis}}=z^{1/3}$, by continuity. This contradicts the arc-analyticity of $\psi_1$, by Remark~\ref{rem:1}.
\qed
\end{exa}

\begin{exa}
\label{ex:no-Nash-reg}
Consider now the equation
\begin{equation}
\label{eq:J}
x^4y^2\,\vp_1+(x^4-y^4(z^4+w^4))\,\vp_2=x^6.
\end{equation}
We claim that
\[
\vp_1=\sqrt{z^4+w^4},\qquad\vp_2=\frac{x^4}{x^2+y^2\sqrt{z^4+w^4}}
\]
is an arc-analytic solution to \eqref{eq:J}, but no Nash regulous solution exists.
It is easy to see that the function $\sqrt{z^4+w^4}$ is blow-Nash, and hence arc-analytic, by Proposition~\ref{prop:BM}.
Thus, by Lemma~\ref{lem:3}, to see that $\vp_2$ is arc-analytic, it suffices to show that it extends continuously to $\R^4$.
First, note that the set
\[
\{(x,y,z,w)\in\R^4:x^2+y^2\sqrt{z^4+w^4}=0\}
\]
is the union of the $y$-axis and the $(z,w)$-plane.
Therefore, $x\to0$ whenever $(x,y,z,w)$ approaches the locus of indeterminacy of $\vp_2$.
On the other hand, the function $\dfrac{x^2}{x^2+y^2\sqrt{z^4+w^4}}\,$ is clearly bounded. Hence, $\vp_2$ can be continuously extended by zero to $\R^4$.

Suppose now that \eqref{eq:J} has a Nash regulous solution $(\psi_1,\psi_2)$. Set
\[
S\coloneqq\{(x,y,z,w)\in\R^4:x^4=y^4(z^4+w^4)\},
\]
and note that $y$ vanishes on $S$ only when $x$ does so. Therefore, $(x/y)^2$ is a well defined function on $S\setminus\{x=0\}$, and thus, by \eqref{eq:J}, we obtain that
\[
\left.\psi_1\right|_{S\setminus\{x=0\}}=\left.\frac{x^2}{y^2}\right|_{S\setminus\{x=0\}}\,.
\]
Note that the $(z,w)$-plane is contained in $S$, and every point $(0,0,c,d)$ of the $(z,w)$-plane can be approached within $S\setminus\{x=0\}$, even by an analytic arc. Indeed, for instance, by the arc $(\sqrt[4]{c^4+d^4}t,t,c,d)$ for $c^4+d^4\neq0$ and the arc $(\sqrt[4]{2}t^2,t,t,t)$ for $c^4+d^4=0$. This allows us to write
\[
\lim_{(x,\dots,w)\to(0,0,c,d)}\psi_1(x,y,z,w)=\lim_{(x,\dots,w)\to(0,0,c,d)}\left.\frac{x^2}{y^2}\right|_{S\setminus\{x=0\}}=\sqrt{c^4+d^4}\,.
\]
Therefore, $\psi_1|_{(z,w)\textrm{-plane}}=\sqrt{z^4+w^4}$, by continuity. This is impossible for a Nash regulous function though, because by \cite[Cor.\,3.2]{Kuch} the graph of a Nash regulous function (and hence its intersection with any coordinate plane) is a closed Nash constructible set. However, the graph of $f(z,w)=\sqrt{z^4+w^4}$ is not Nash constructible, by Remark~\ref{rem:2}.
\qed
\end{exa}



\begin{thebibliography}{99}


\normalsize
\baselineskip=17pt


\bibitem {AS1} J.\,Adamus and H.\,Seyedinejad,
 \textit{A proof of Kurdyka's conjecture on arc-analytic functions}, Math. Ann. \textbf{369} (2017), 387--395.

\bibitem {AS2} J.\,Adamus and H.\,Seyedinejad,
 \textit{Extensions of arc-analytic functions}, Math. Ann. (online, 2018); DOI: 10.1007/s00208-017-1639-7.

\bibitem {BM} E.\,Bierstone and P.\,D.\,Milman,
 \textit{Arc-analytic functions}, Invent. Math. \textbf{101} (1990), 411--424.

\bibitem {C} H. Cartan,
 \textit{Vari{\'e}t{\'e}s analytiques r{\'e}elles et vari{\'e}tes analytiques complexes}, Bull. Soc. Math. France \textbf{85} (1957),
 77--99.

\bibitem {FK} C.\,Fefferman and J.\,Koll{\'a}r,
 \textit{Continuous solutions of linear equations}, in ``From Fourier analysis and number theory to Radon transforms and geometry'',
 233--282, Dev. Math. \textbf{28}, Springer, New York, 2013.

\bibitem {FHMM} G.\,Fichou, J.\,Huisman, F.\,Mangolte, and J.-P.\,Monnier,
 \textit{Fonctions r{\'e}gulues}, J. Reine Angew. Math. \textbf{718} (2016), 103--151.

\bibitem {KN} J.\,Koll{\'a}r and K.\,Nowak,
 \textit{Continuous rational functions on real and $p$-adic varieties}, Math. Z. \textbf{279} (2015), 85--97.

\bibitem {Kuch} W.\,Kucharz,
 \textit{Nash regulous functions}, Ann. Polon. Math. \textbf{119} (2017), 275--289 .

\bibitem {KuKu} W.\,Kucharz and K.\,Kurdyka,
 \textit{Linear equations on real algebraic surfaces}, Manuscripta Math. \textbf{154} (2017), 285--296.

\bibitem {S} H.\,Seyedinejad,
 \textit{Decomposition of sets in real algebraic geometry}, electronic preprint, arXiv:1704.08965v1 (2017).

\end{thebibliography}
\end{document}